\newtheorem{theorem}{Theorem}[section]
\newtheorem{proposition}[theorem]{Proposition}
\newtheorem{corollary}[theorem]{Corollary}
\newtheorem*{Theorem}{Theorem}
\theoremstyle{definition}
\newtheorem{definition}[theorem]{Definition}
\theoremstyle{remark}
\newtheorem{remark}[theorem]{Remark}
\numberwithin{equation}{section}
\begin{document}

\title[A Birkhoff type transitivity theorem]{A Birkhoff type transitivity theorem for non-separable completely metrizable spaces with
applications to Linear Dynamics}

\author[A. Manoussos]{A. Manoussos}
\address{Fakult\"{a}t f\"{u}r Mathematik, SFB 701, Universit\"{a}t Bielefeld, Postfach 100131, D-33501 Bielefeld, Germany}
\email{amanouss@math.uni-bielefeld.de}
\urladdr{http://www.math.uni-bielefeld.de/~amanouss}
\thanks{During this research the author was fully supported by SFB 701 ``Spektrale Strukturen und
Topologische Methoden in der Mathematik" at the University of Bielefeld, Germany.}

\subjclass[2010]{Primary 47A16, 54H20; Secondary 37B99, 54H15}

\date{}

\keywords{Topological transitivity, hypercyclicity, Birkhoff's transitivity theorem, $J$-set, almost topological transitivity}

\begin{abstract}
In this note we prove a Birkhoff type transitivity theorem for continuous maps acting on non-separable completely metri\-zable spaces and we give some applications for
dynamics of bounded linear operators acting on complex Fr\'{e}chet spaces. Among them we show that any positive power and any unimodular multiple of a topologically
transitive linear operator is topologically transitive, generalizing similar results of S.I. Ansari and F. Le\'{o}n-Saavedra - V. M\"{u}ller for hypercyclic operators.
\end{abstract}

\maketitle

\section{Introduction and basic notions}
The scope of this paper is to provide a tool for studying topologically transitive operators acting on non-separable Fr\'{e}chet spaces by using technics and known
results from the theory of hypercyclic operators. This tool is the following theorem in which we show that each vector in the underlying space $X$ is contained in
``many" closed invariant subspaces such that the restriction of the operator to each one of them is hypercyclic:

\begin{Theorem}[Theorem \ref{hypsub}]
Let $T:X\to X$ be a topologically transitive operator acting on a completely metrizable vector space $X$ and let $y$ be a vector of $X$. There exists a dense
$G_{\delta}$ subset $D$ of $X$ such that for each $z\in D$ there exists a $T$-invariant (separable) closed subspace $Y_z$ of $X$ with $y,z\in Y_z$ such that the
restriction of $T$ to $Y_z$, $T:Y_z\to Y_z$, is hypercyclic.
\end{Theorem}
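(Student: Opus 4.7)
The plan is to realize $Y_z$ as the closed $T$-invariant linear span of $y$ and $z$ and to reduce the hypercyclicity of $T|_{Y_z}$ to the topological transitivity of $T|_{Y_z}$ on $Y_z$ via Birkhoff's classical transitivity theorem, which applies because $Y_z$ will be separable and completely metrizable. I would start by fixing a translation-invariant complete metric $d$ on $X$ and letting $M_y := \overline{\mathrm{span}}\{T^n y : n \geq 0\}$ be the smallest closed $T$-invariant subspace containing $y$; since $M_y$ is separable, pick a countable dense set $\{y_n\}_{n \in \mathbb{N}} \subseteq M_y$. For each $(n,m) \in \mathbb{N} \times \mathbb{N}$, put $V_{n,m} := \{x \in X : d(x,y_n) < 1/m\}$. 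Topological transitivity of $T$ on $X$ makes each $\bigcup_{k \geq 0} T^{-k}(V_{n,m})$ open and dense in $X$; since $X$ is completely metrizable, Baire's theorem yields that
\[
D := \bigcap_{n,m \in \mathbb{N}} \bigcup_{k \geq 0} T^{-k}(V_{n,m})
\]
is a dense $G_{\delta}$ subset of $X$. For every $z \in D$ the orbit $\{T^k z : k \geq 0\}$ meets every $V_{n,m}$, so $M_y \subseteq \overline{\{T^k z : k \geq 0\}}$.

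Given $z \in D$, I would set $Y_z := \overline{\mathrm{span}}\{T^k y, T^k z : k \geq 0\}$, a separable closed $T$-invariant subspace of $X$ containing both $y$ and $z$. By Birkhoff's classical transitivity theorem, applicable since $Y_z$ is separable and completely metrizable, the hypercyclicity of $T|_{Y_z}$ on $Y_z$ is equivalent to its topological transitivity on $Y_z$, so the task reduces to establishing this transitivity.

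The main obstacle is precisely this last step, since topological transitivity does not in general pass from an ambient space to an invariant closed subspace; transitivity of $T$ on $X$ alone is not enough. My plan is to exploit the extra orbital density $M_y \subseteq \overline{\{T^k z\}_k}$ available for $z \in D$ together with the Birkhoff-type theorem for non-separable spaces proved earlier in the paper. Given non-empty open $A, B \subseteq Y_z$, one reduces (by density of $\mathrm{span}\{T^k y, T^k z : k \geq 0\}$ in $Y_z$) to the case where $A$ and $B$ are neighborhoods of finite rational combinations of the generators $T^k y$ and $T^k z$; one then combines the orbital density of $z$ in $M_y$ with the transitivity of $T$ on $X$ to produce some $n \geq 0$ with $T^n(A) \cap B \cap Y_z \neq \emptyset$. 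Once this transitivity is established, classical Birkhoff produces a hypercyclic vector for $T|_{Y_z}$ in $Y_z$, completing the proof.
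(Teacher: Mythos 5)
Your construction of the dense $G_{\delta}$ set $D$ is fine and matches the first step of the paper (it is essentially Corollary \ref{countablepoints} applied to the countable set $O(y,T)$, except that you drop the recurrence of $z$, which the paper also builds into $D$ and later needs). The genuine gap is exactly at the point you flag as ``the main obstacle'': with $Y_z:=\overline{\mathrm{span}}\{T^k y, T^k z : k\ge 0\}$ you have no mechanism to prove that $T|_{Y_z}$ is topologically transitive. The only density you possess is $M_y\subset\overline{O(z,T)}$; the orbit of $z$ is not dense in $Y_z$ (its closure need not contain linear combinations such as $y+z$ or $T^jy+T^kz$, which are typical centers of your sets $A$ and $B$), and transitivity of $T$ on the ambient space $X$ only produces connecting points of $X$, which need not lie in $Y_z$ nor map into $Y_z$. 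Your sketch ``combine the orbital density of $z$ in $M_y$ with the transitivity of $T$ on $X$ to produce $n$ with $T^n(A)\cap B\cap Y_z\neq\emptyset$'' is precisely the unproved crux, and nothing in the data you have set up supplies such an $n$; a single application of the $G_{\delta}$-density argument is simply not enough.

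The paper overcomes this with an iterative tower, which is the key idea missing from your proposal. Starting from $x_1:=z$ with $W_0\subset\overline{O(x_1,T)}$ and $x_1$ recurrent, one reapplies Corollary \ref{countablepoints} to the countable set $O(x_1,T)$ to get $x_2$ with $W_1\subset\overline{O(x_2,T)}$ ($W_n$ being the closed span of $O(x_n,T)$) and $x_2$ recurrent, and so on; then $Y_z:=\overline{\bigcup_n W_n}$. The point of this enlargement is structural: any two relatively open subsets $U,V$ of $Y_z$ meet a \emph{common} orbit closure $\overline{O(x_p,T)}$, so $T^kx_p\in U$ and $T^mx_p\in V$ for the \emph{same} vector $x_p$, and the recurrence of $x_p$ lets one arrange $m>k$, giving $T^{m-k}U\cap V\neq\emptyset$; Birkhoff's classical theorem then yields hypercyclicity of $T|_{Y_z}$. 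If you want to salvage your approach you must either enlarge $Y_z$ by such an induction (after which it is no longer just the closed span of the two orbits) or supply a genuinely new argument for transitivity of the restriction to your smaller $Y_z$, which the hypotheses you have assembled do not give; you should also reinstate recurrence of the points produced by the Baire argument, since even in the repaired proof it is needed to choose the larger exponent.
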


The previous theorem is derived from a Birkhoff type transitivity theorem for topologically transitive continuous selfmaps of a (not necessarily separable) completely
metrizable space. More precisely we show the following:

\begin{Theorem}[Theorem \ref{toptrans}]
Let $X$ be a completely metrizable space which has no isolated points. Let $T:X\to X$ be a continuous map acting on $X$  and let $x\in X$. If $T$ is topologically
transitive there exists a dense $G_{\delta}$ subset $D$ of $X$ with the following properties:
\begin{enumerate}
\item[(i)] Every point $z\in D$ is recurrent, that is there exists a strictly increasing sequence of positive integers $\{ k_n\}$ such that $T^{k_n}z\to z$.
\item[(ii)] The point $x$ belongs to the orbit closure $\overline{O(z,T)}$ of $z$ for every $z\in D$; in particular $x$ belongs to the limit set $L(z)$ of $z$
for every $z\in D$, i.e. for each $z\in D$ there exists a strictly increasing sequence of positive integers $\{ m_n\}$ such that $T^{m_n}z\to x$.
\end{enumerate}
\end{Theorem}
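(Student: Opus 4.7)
The plan is to imitate the classical proof of Birkhoff's transitivity theorem, but to replace the (unavailable) countable base of $X$ by a countable neighbourhood base at the single distinguished point $x$. The whole proof rests on one auxiliary claim that plays the role of separability in the classical argument.

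\emph{Auxiliary claim.} Under the hypotheses of the theorem, for any two non-empty open sets $U,V\subseteq X$ the return-time set $N(U,V):=\{n\geq 1:T^{-n}(V)\cap U\neq\emptyset\}$ is infinite. Suppose for contradiction that $M:=\max N(U,V)<\infty$. The set $U':=U\cap T^{-M}(V)$ is non-empty and open, and satisfies $T^{M}(U')\subseteq V$. Because $X$ has no isolated points and is Hausdorff, $U'$ splits into two disjoint non-empty open subsets $U_1',U_2'\subseteq U'$. Topological transitivity applied to $(U_1',U_2')$ produces an integer $n$ with $T^{-n}(U_2')\cap U_1'\neq\emptyset$, and disjointness forces $n\geq 1$. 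Any witness $z\in U_1'$ with $T^{n}z\in U_2'\subseteq T^{-M}(V)$ satisfies $z\in U$ and $T^{n+M}z\in V$, so $n+M\in N(U,V)$ exceeds $M$, a contradiction.

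Now fix a compatible complete metric $d$ on $X$ and a countable decreasing base $\{U_m\}_{m\geq 1}$ of open neighbourhoods of $x$, both of which exist by metrizability of $X$. For (ii) I set
\[
A:=\bigcap_{m\geq 1}\bigcap_{N\geq 1}\bigcup_{k\geq N}T^{-k}(U_m),
\]
each inner union being open by continuity of $T$ and dense by the auxiliary claim applied to $(V,U_m)$ for an arbitrary non-empty open $V$. For (i) I set
\[
R:=\bigcap_{j\geq 1}\bigcap_{N\geq 1}\bigcup_{k\geq N}\bigl\{z\in X:d(T^{k}z,z)<1/j\bigr\},
\]
each inner union being open by continuity of $z\mapsto d(T^{k}z,z)$; density follows by shrinking an arbitrary non-empty open $V$ to a non-empty $V'\subseteq V$ of diameter less than $1/j$ and invoking the auxiliary claim on $(V',V')$. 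Since $X$ is completely metrizable it is Baire, so $D:=A\cap R$ is a dense $G_\delta$; the nested quantifier $\bigcap_{N}\bigcup_{k\geq N}$ is precisely what yields the strictly increasing sequences $\{m_n\}$ with $T^{m_n}z\to x$ and $\{k_n\}$ with $T^{k_n}z\to z$ demanded by the theorem, via a standard diagonal choice $k_j>k_{j-1}$ within the $j$-th slab.

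The main technical hurdle is the auxiliary claim: in the separable setting it is typically deduced from the existence of a dense orbit, a tool not available here. The no-isolated-points assumption enters precisely where $U'$ is split into two disjoint open pieces, thereby upgrading the trivial return time $n=0$ permitted by topological transitivity to a genuine $n\geq 1$; this is what forces $N(U,V)$ to be infinite and in turn makes both dense $G_\delta$ sets $A$ and $R$ available.
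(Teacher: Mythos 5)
Your proof is correct, but it reaches Theorem \ref{toptrans} by a route that differs from the paper's in its key lemma. The paper never argues with return-time sets: it first proves a localized statement (Theorem \ref{localtr}) by a nested-balls/completeness construction phrased in terms of the extended limit sets $J(w)$, and then upgrades it to a dense $G_{\delta}$ (Corollary \ref{localtr2}) by a Baire argument whose dense open sets are formally the same as your $A$ and $R$ (with $B(x,1/s)$ in place of your base $U_m$); the density there is supplied by the hypothesis that $x\in J(w)$ for all $w$ and that every point is non-wandering, which in the transitive, no-isolated-points case follows from the closedness of $J$-sets together with the fact that non-empty open subsets of such a space are uncountable. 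You instead supply the density directly from the classical lemma that $N(U,V)$ is infinite for a transitive map on a space without isolated points, proved by the splitting trick; this bypasses the $J$-set formalism and Theorem \ref{localtr} altogether and gives a self-contained, more elementary proof of exactly Theorem \ref{toptrans} (and of Corollary \ref{countablepoints} by intersecting over a countable set of targets). What it does not give is the local version (Theorem \ref{localtr}/Corollary \ref{localtr2}), which assumes only $x\in J(w)$ on a single ball and is what the paper reuses in Section 4 (Proposition \ref{altra}), where transitivity is not yet available.

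One small repair in your auxiliary claim: if $N(U,V)=\emptyset$ the maximum $M$ is undefined, so the contradiction should be split into two cases or, more economically, set $M:=\max\bigl(\{0\}\cup N(U,V)\bigr)$; when $M=0$ transitivity forces $U\cap V\neq\emptyset$, so $U':=U\cap T^{-M}(V)$ is still non-empty and the same splitting argument produces some $n+M\geq 1$ in $N(U,V)$ exceeding $M$, a contradiction in either case.
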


This result comes by ``localizing" Birkhoff's transitivity theorem; see Theorem \ref{localtr} and Corollary \ref{localtr2}.  This local behavior was hidden in the proof
of Birkhoff's transitivity theorem behind the use of a countable base of $X$. Recall that a continuous map $T:X\to X$ acting on a Hausdorff topological space $X$ is
called \textit{topologically transitive} if for every pair of non-empty open subsets $U,V$ of $X$ there exists a non-negative integer $n$ such that $T^nU\cap V\neq
\emptyset $. In case $X$ is completely  metrizable and separable, Birkhoff's transitivity theorem \cite{Bir} states that topological transitivity is equivalent to
\textit{hypercyclicity}, i.e. there exists a point $x$ in $X$ which has a dense orbit. The \textit{orbit} of $x\in X$ under $T$ is the set $O(x,T):=\{ T^n x\,;\,
n\in\mathbb{N}\cup \{ 0\}\}$. Hypercyclic operators on separable Fr\'{e}chet spaces have been a subject of extensive study in the last decades. Very good references are
the recent books \cite{BaMa} and \cite{GEPe}.

The applications we give in $\S3$ indicate the possibility to use the previous mentioned results to work on non-separable Fr\'{e}chet spaces by applying technics and
known results from the theory of hypercyclic operators. Namely, in $\S3$, we show the following theorem which generalizes similar results of S.I. Ansari \cite{Ansa} and
F. Le\'{o}n-Saavedra - V. M\"{u}ller \cite{LeoMu} for the case of a hypercyclic operator:

\begin{Theorem}[Theorem \ref{ALM}]
Let $T:X\to X$ be a topologically transitive operator acting on a complex Fr\'{e}chet space $X$ and let $x\in X$. Then
\begin{enumerate}
\item[(i)] The operator $T^p:X\to X$ is topologically transitive for every positive integer $p$ and there exists a dense subset $D$ of $X$ with the
following properties:
\begin{enumerate}
\item[(a)] Every vector $z\in D$ is a recurrent vector for the operator  $T^p$.
\item[(b)] The vector $x$ belongs to the limit set $L_{T^p}(z)$ for every positive integer $p$ and for every $z\in D$.
\end{enumerate}
\item[(ii)] If $\lambda$ is a complex number of modulus $1$ the operator $\lambda\, T:X\to X$ is topologically transitive and there exists a dense subset $D$ of $X$
with the following properties:
\begin{enumerate}
\item[(a)] Every vector $z\in D$ is a recurrent vector for the operator $\lambda\, T$.
\item[(b)] The vector $x$ belongs to the limit set $L_{\lambda\, T}(z)$ for every $|\lambda |=1$ and for every $z\in D$.
\end{enumerate}
\end{enumerate}
\end{Theorem}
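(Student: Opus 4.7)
The plan is to reduce to the separable case by means of Theorem \ref{hypsub} and then invoke the theorems of Ansari and Le\'{o}n-Saavedra--M\"{u}ller inside each hypercyclic $T$-invariant subspace. The crucial feature that allows a single dense set $D$ to serve all powers $p$ and all unimodular scalars $\lambda$ simultaneously is that both of those classical results actually yield an equality of hypercyclic-vector sets: on a separable Fr\'{e}chet space one has $\mathrm{HC}(S)=\mathrm{HC}(S^p)=\mathrm{HC}(\lambda S)$ for every $p\ge 1$ and every $|\lambda|=1$.

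First I would verify topological transitivity of $T^p$ and of $\lambda T$ on $X$. Given non-empty open $U,V\subset X$, fix any $y\in V$ and apply Theorem \ref{hypsub} to $y$, obtaining a dense $G_\delta$ set $D_0\subset X$ and, for each $z\in D_0$, a separable $T$-invariant closed subspace $Y_z\ni y,z$ with $T|_{Y_z}$ hypercyclic. Picking $z\in D_0\cap U$, the sets $U\cap Y_z$ and $V\cap Y_z$ are non-empty open subsets of the separable Fr\'{e}chet space $Y_z$, on which $T^p|_{Y_z}$ (resp.\ $\lambda T|_{Y_z}$) is hypercyclic by Ansari (resp.\ Le\'{o}n-Saavedra--M\"{u}ller), hence topologically transitive. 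Some iterate therefore sends $U\cap Y_z$ into $V\cap Y_z$, which gives transitivity on $X$.

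Next I would construct the dense set $D$. Apply Theorem \ref{hypsub} with $y:=x$, obtaining $D_0$ and the subspaces $\{Y_z\}_{z\in D_0}$, each containing $x$, and set
\[
D:=\bigcup_{z\in D_0}\mathrm{HC}\bigl(T|_{Y_z}\bigr).
\]
Density is immediate: for any non-empty open $U\subset X$, pick $z\in D_0\cap U$; then Birkhoff's theorem applied inside the separable space $Y_z$ says $\mathrm{HC}(T|_{Y_z})$ is a dense $G_\delta$ in $Y_z$, so it meets the non-empty open set $U\cap Y_z$. For $w\in D$ I fix a witness $Y_w$ with $w\in\mathrm{HC}(T|_{Y_w})$; by construction $x\in Y_w$.

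Finally, for each $w\in D$, Ansari and Le\'{o}n-Saavedra--M\"{u}ller give $w\in\mathrm{HC}(T^p|_{Y_w})$ for every $p\ge 1$ and $w\in\mathrm{HC}(\lambda T|_{Y_w})$ for every $|\lambda|=1$. Each such orbit is dense in the infinite-dimensional space $Y_w$, which has no isolated points, and a routine argument then produces a strictly increasing sequence of iterates converging to any prescribed target point of $Y_w$; taking the target to be $w$ yields recurrence, while taking the target to be $x\in Y_w$ yields $x\in L_{T^p}(w)\cap L_{\lambda T}(w)$. The expected main obstacle is that $\{\lambda:|\lambda|=1\}$ is uncountable, which rules out a naive Baire-category intersection of sets $D_\lambda$ obtained by applying Theorem \ref{toptrans} separately to each $\lambda T$; this is precisely the difficulty defeated by the equality $\mathrm{HC}(T|_{Y_w})=\mathrm{HC}(\lambda T|_{Y_w})$, which lets one $D$ discharge all $\lambda$ (and, by the same token, all $p$) at once.
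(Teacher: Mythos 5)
Your proposal is correct and follows essentially the same route as the paper: reduce to separable $T$-invariant closed subspaces containing $x$ via Theorem \ref{hypsub}, invoke Ansari and Le\'{o}n-Saavedra--M\"{u}ller on each such subspace (crucially using that the hypercyclic-vector sets of $T$, $T^p$ and $\lambda T$ coincide there), and take $D$ to be the union of the sets of hypercyclic vectors of the restrictions $T|_{Y_z}$. You merely spell out some steps the paper leaves implicit (density of $D$, the passage from a dense orbit in $Y_w$ to recurrence and to $x\in L_{T^p}(w)\cap L_{\lambda T}(w)$), so no further changes are needed.
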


We finish $\S3$ with a characterization, similar to that of H.N. Salas in \cite{Salas}, of topological transitivity of a backward unilateral weighted shift on $l_2(H)$,
where $H$ is a (not necessarily separable) Hilbert space, in terms of its weight sequence; see Proposition \ref{Salas}.

Finally, in $\S4$ we show that every continuous almost topologically transitive map acting on a completely metrizable space which has no isolated points is topologically
transitive; see Proposition \ref{altra}. Recall that a continuous map $T$ acting on a Hausdorff topological space $X$ is called \textit{almost topologically transitive}
if for every pair of non-empty open sets $U,V\subset X$ there exists a non-negative integer $n$ such that $T^nU\cap V\neq \emptyset$ or $T^nV\cap U\neq \emptyset$.

\section{A Birkhoff type transitivity theorem for non-separable completely metrizable spaces}
Birkhoff's transitivity theorem can be reformulated using some very useful concepts from the theories of Dynamical Systems and Topological Transformation Groups, namely
the concept of the \textit{limit set} of a point $x\in X$:

\[
\begin{split}
L(x)=\{&y\in X:\,\mbox{ there exists a strictly increasing sequence}\\
       &\mbox{of positive integers}\,\,\{k_{n}\}\mbox{ such that }\,T^{k_{n} }x\rightarrow y\}
\end{split}
\]
that describes the limit behavior of the orbit $O(x,T)$ and the generalized (prolongational) limit set of $x\in X$ that describes the asymptotic behavior of the
orbits of nearby points to $x\in X$:

\[
\begin{split}
J(x)=\{ &y\in X:\,\mbox{ there exist a strictly increasing sequence of positive}\\
&\mbox{integers}\,\{k_{n}\}\,\mbox{and a sequence }\,\{x_{n}\}\subset X\,\mbox{such that}\, x_{n}\rightarrow x\,
\mbox{and}\\
&T^{k_{n}}x_{n}\rightarrow y\}.
\end{split}
\]

The limit and the extended limit sets have their roots in the Stability Theory of Dynamical Systems in which they are mainly used to describe the Lyapunov and the
asymptotic stability of an equilibrium point.  They are $T$-invariant and closed subsets of $X$, see e.g. \cite[Proposition 2.6]{cosma1}. In view of the concept of limit
sets, a bounded linear operator $T:X\to X$ acting on a separable Fr\'{e}chet space $X$ is hypercyclic if and only if $L(x)=X$ for some non-zero vector $x\in X$. And
Birkhoff's transitivity theorem says that $T$ is hypercyclic if and only if $J(x)=X$ for every $x\in X$.

\begin{remark}
If $X$ is a completely metrizable space which has no isolated points (hence $X$ is uncountable), the orbit of a point $x\in X$ (which is a countable set) is dense in $X$
if and only if $L(x)=X$. In a similar way, topological transitivity, i.e.  $J(x)\cup O(x,T)=X$ for every $x\in X$, means that $J(x)=X$ for every $x\in X$ since $J(x)$ is
a closed subset of $X$. In general, if $X$ is a completely metrizable space, one can use instead of the limit set $L(x)$  the orbit closure $\overline{O(x,T)}=O(x,T)\cup
L(x)$ and instead of the generalized limit set $J(x)$ the union $O(x,T)\cup J(x)=:D(x)$ (the concept of the $D$-set comes also from the Stability Theory of Dynamical
Systems and it is called the prolongation of the orbit $O(x,T)$, see e.g. \cite{BaSz}).
\end{remark}

The following theorem, which can be seen as a ``localized" Birkhoff's transitivity theorem, plays a key role in our approach to topological transitivity. The symbol
$B(x,\varepsilon )$ stands for the open ball centered at $x$ with radius $\varepsilon >0$. Recall that a point $x\in X$ is called \textit{recurrent} if $x\in L(x)$.

\begin{theorem} \label{localtr}
Let $X$ be a completely metrizable space and let $T:X\to X$ be a continuous map acting on $X$. Assume that there are points $x,y\in X$ and a positive number $\varepsilon
>0$ such that $x\in J(w)$ for every $w\in B(y,\varepsilon )$. Then
\begin{enumerate}
\item[(i)] there is a point $z\in B(y,\varepsilon )$ such that $x\in L(z)$;

\item[(ii)] if, moreover, every point $w\in B(y,\varepsilon )$ is non-wandering, i.e. $w\in J(w)$, there is a recurrent point $z\in B(y,\varepsilon )$ such that $x\in L(z)$.
\end{enumerate}
\end{theorem}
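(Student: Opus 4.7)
The plan is to construct the desired point $z$ by a Baire-style nested-ball construction, localised inside $B(y,\varepsilon)$. Fix a complete compatible metric $d$ on $X$, and set $V_0:=B(y,\varepsilon)$. Inductively I will produce open balls $V_1\supset V_2\supset\cdots$ with $\overline{V_{n+1}}\subset V_n$ for all $n\ge 0$ (so in particular $\overline{V_1}\subset V_0$) and $\operatorname{diam}(V_n)\to 0$; completeness of $d$ then yields a unique point $z\in\bigcap_n\overline{V_n}$, and this $z$ lies in $V_0=B(y,\varepsilon)$. The inductive freedom is used to force $T^{m_n}z\to x$ for a strictly increasing sequence $(m_n)$ of positive integers and, in part~(ii), additionally $T^{l_n}z\to z$ for another such sequence $(l_n)$.

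For (i), given $V_{n-1}$, I would pick any $w\in V_{n-1}\subset B(y,\varepsilon)$; the hypothesis $x\in J(w)$ yields $w_j\to w$ and strictly increasing positive integers $p_j\to\infty$ with $T^{p_j}w_j\to x$. Choosing $j$ so large that $p_j>m_{n-1}$ (with $m_0:=0$), $w_j\in V_{n-1}$, and $d(T^{p_j}w_j,x)<\tfrac{1}{2n}$, and setting $m_n:=p_j$, continuity of $T^{m_n}$ at $w_j$ produces an open ball $V_n$ centred at $w_j$ with $\overline{V_n}\subset V_{n-1}$, $\operatorname{diam}(V_n)<1/n$, and $T^{m_n}(V_n)\subset B(x,1/n)$. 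Since $z\in V_n$ for every $n$, we get $T^{m_n}z\in B(x,1/n)$, hence $T^{m_n}z\to x$ and $x\in L(z)$.

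For (ii), I would interleave two types of step. At odd indices $n=2i-1$ I execute the construction above, obtaining an integer $m_i>m_{i-1}$ with $T^{m_i}(V_n)\subset B(x,1/i)$. At even indices $n=2i$ I use the extra non-wandering hypothesis: pick $p\in V_{n-1}$, so that $p\in J(p)$ yields $p_j\to p$ and strictly increasing positive integers $q_j\to\infty$ with $T^{q_j}p_j\to p$; then choose $j$ with $q_j>l_{i-1}$ (with $l_0:=0$), $p_j\in V_{n-1}$, and $\max\{d(p_j,p),\,d(T^{q_j}p_j,p)\}<\delta:=\tfrac{1}{4i}$. Continuity of $T^{q_j}$ at $p_j$ provides an open ball $V_n$ around $p_j$ with $\overline{V_n}\subset V_{n-1}$, $\operatorname{diam}(V_n)<1/i$, and $T^{q_j}(V_n)\subset B(p,2\delta)$. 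Setting $l_i:=q_j$, the triangle inequality gives $d(w,T^{l_i}w)\le d(w,p)+d(p,T^{l_i}w)<4\delta=1/i$ for every $w\in V_n$. The resulting $z$ then satisfies both $T^{m_i}z\to x$ and $T^{l_i}z\to z$, so $z\in B(y,\varepsilon)$ is recurrent and $x\in L(z)$.

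The main obstacle is juggling, in part~(ii), the two simultaneous asymptotic requirements on $z$: the recurrence-promoting even steps must not disturb the approximation of $x$ set up at the odd steps, and both exponent sequences $(m_i)$ and $(l_i)$ must be strictly increasing. The interleaving scheme handles this cleanly, because the integer sequences witnessing membership in the $J$-sets already tend to infinity, so at every stage the newly chosen exponent may be taken beyond all previously chosen ones; the remaining requirements (nesting, shrinking diameters, continuity of the iterates on small balls) are routine Baire/continuity bookkeeping.
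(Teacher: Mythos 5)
Your proposal is correct and follows essentially the same route as the paper: a nested-ball construction inside $B(y,\varepsilon)$ that alternates steps using $x\in J(w)$ (to force $T^{m_i}z\to x$) with steps using the non-wandering hypothesis $w\in J(w)$ (to force $T^{l_i}z\to z$), with completeness and shrinking diameters producing the point $z$. The only blemish is a harmless constant in your even-step estimate (with $\operatorname{diam}(V_n)<4\delta$ and $d(p_j,p)<\delta$ one gets $d(w,T^{l_i}w)<5\delta$ rather than $4\delta$), which still tends to zero and does not affect the conclusion.
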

\begin{proof}
We give only the proof of claim (ii) since the proof of claim (i) is similar. Since $x\in J(y)$ there exist a point $y_1\in B(y,\varepsilon )$, a positive integer $k_1$
and an open ball $B(y_1,\varepsilon_1 )\subset B(y,\varepsilon )$, with $\varepsilon_1 <1$ such that $T^{k_1}B(y_1,\varepsilon_1 )\subset B(x,1)$. And since $y_1\in
J(y_1)$ there exist a point $w_1\in  B(y_1,\varepsilon_1 )$, an open ball $B(w_1,r_1) \subset B(y_1,\varepsilon_1 )$, for some $r_1>0$, and a positive integer $m_1$ such
that $T^{m_1} B(w_1,r_1)\subset B(y_1,\varepsilon_1 )$. Note that $x\in J(w_1)$ since $x\in J(w)$ for every $w\in B(y,\varepsilon )$. Thus, there exist a point $y_2\in
B(w_1,r_1 )$, a positive integer $k_2>k_1$ and an open ball $B(y_2,\varepsilon_2 )\subset B(w_1,r_1 )$ with $\varepsilon_2 <1/2$ such that $T^{k_2}B(y_2,\varepsilon_2
)\subset B(x,1/2)$. Now we can proceed as before. Since $y_2\in J(y_2)$ there exist a point $w_2\in  B(y_2,\varepsilon_2 )$, an open ball $B(w_2,r_2) \subset
B(y_2,\varepsilon_2 )$, for some $r_2>0$, and a positive integer $m_2>m_1$ such that $T^{m_2} B(w_2,r_2)\subset B(y_2,\varepsilon_2 )$. Proceeding by induction we can
find two sequences of points $\{ y_n\}$ and $\{ w_n\}$, two strictly increasing sequences of positive integers $\{ k_n\}$ and $\{ m_n\}$ and two sequences of positive
numbers $r_n \leq \varepsilon_n <\frac{1}{2^n}$ with the following properties:
\begin{enumerate}
\item[(a)] $B(y_{n+1},\varepsilon_{n+1} )\subset B(w_n,r_n )\subset B(y_n,\varepsilon_n)$,

\item[(b)] $T^{k_{n+1}}B(y_{n},\varepsilon_{n} )\subset B(x,1/(n+1))$ and

\item[(c)] $T^{m_n} B(w_{n},r_n)\subset B(y_n,\varepsilon_n )$,
\end{enumerate}
for every $n\in\mathbb{N}$. Since $X$ is a complete metric space and $r_n \leq \varepsilon_n <\frac{1}{2^n}$ then
\[
\bigcap_n B(y_n,\varepsilon_n )\, =\, \bigcap_n B(w_n,r_n )\, = \, \{z \}, \quad \mbox{for some}\,\, z\in X.
\]
Therefore, $T^{k_n}z\to x$ by property (b) and $T^{m_n}z\to z$ by properties (a) and (c).
\end{proof}

\begin{corollary} \label{localtr2}
Let $(X,d)$ be a complete metric space with metric $d$ and let $T:X\to X$ be a continuous map acting on $X$. Assume that there are points $x,y\in X$ and a positive
number $\varepsilon
>0$ such that $x\in J(w)$ for every $w\in B(y,\varepsilon )$. Then
\begin{enumerate}
\item[(i)] There is a dense $G_{\delta}$ subset $D$ of $\overline{B(y,\varepsilon )}$ such that $x\in L(z)$ for every $z\in D$.

\item[(ii)] If, moreover, every point in $B(y,\varepsilon )$ is non-wandering, i.e. $w\in J(w)$,  there is a dense $G_{\delta}$ subset $D$ of $\overline{B(y,\varepsilon )}$
such that $x\in L(z)$  and $z\in L(z)$ for every $z\in D$.
\end{enumerate}
\end{corollary}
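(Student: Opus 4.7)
The plan is to recognize both desired conditions, $x\in L(z)$ and $z\in L(z)$, as $G_\delta$ conditions, and then combine Theorem \ref{localtr} with the Baire category theorem in the complete metric space $\overline{B(y,\varepsilon )}$. Since $\overline{B(y,\varepsilon )}$ is a closed subset of the complete metric space $X$, it is itself completely metrizable, so Baire's theorem applies there.

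First I would rewrite the two sets involved. The set $A:=\{z\in X:\,x\in L(z)\}$ coincides with $\bigcap_{m\ge 1}\bigcap_{N\ge 1}\bigcup_{k\ge N}T^{-k}(B(x,1/m))$; by continuity of $T$ each $T^{-k}(B(x,1/m))$ is open, so $A$ is a $G_\delta$ in $X$. (That this set equals the $L$-set is a standard diagonal extraction: for every $m$ and every $N$ we find $k\ge N$ with $T^kz\in B(x,1/m)$, which yields a strictly increasing $k_n$ with $T^{k_n}z\to x$.) Similarly the set $R:=\{z:\,z\in L(z)\}$ of recurrent points equals $\bigcap_m\bigcap_N\bigcup_{k\ge N}\{z:\,d(T^kz,z)<1/m\}$, a $G_\delta$ because each $z\mapsto d(T^kz,z)$ is continuous. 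Intersecting with $\overline{B(y,\varepsilon )}$ preserves the $G_\delta$ property in the subspace.

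Second, I would establish density. Let $U$ be a non-empty open subset of $\overline{B(y,\varepsilon )}$. Since $B(y,\varepsilon )$ is open and dense in $\overline{B(y,\varepsilon )}$, I can pick $w_0\in U\cap B(y,\varepsilon )$ and $\delta>0$ with $B(w_0,\delta)\subset U\cap B(y,\varepsilon )$. The hypothesis ``$x\in J(w)$ for every $w\in B(y,\varepsilon )$'' restricts automatically to the smaller ball $B(w_0,\delta)$, so applying Theorem \ref{localtr}(i) with $(y,\varepsilon )$ replaced by $(w_0,\delta)$ produces a point $z\in B(w_0,\delta)\subset U$ with $x\in L(z)$. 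This shows $A\cap\overline{B(y,\varepsilon )}$ is dense. For part (ii), the non-wandering hypothesis likewise restricts to $B(w_0,\delta)$, so Theorem \ref{localtr}(ii) furnishes a $z\in B(w_0,\delta)\subset U$ that is moreover recurrent; thus $A\cap R\cap\overline{B(y,\varepsilon )}$ is dense as well.

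Finally, invoking the Baire category theorem in $\overline{B(y,\varepsilon )}$ concludes: $D:=A\cap\overline{B(y,\varepsilon )}$ is a dense $G_\delta$ in part (i), and $D:=A\cap R\cap\overline{B(y,\varepsilon )}$ is a dense $G_\delta$ in part (ii). The main (and only) non-trivial point is the density step; once Theorem \ref{localtr} is available, it does the real work by providing the required $z$ inside every subball of $B(y,\varepsilon )$, and the rest is routine $G_\delta$ bookkeeping plus Baire category. No new nested-ball construction is needed here — the inductive scheme has already been performed inside the proof of Theorem \ref{localtr}.
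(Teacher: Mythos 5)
Your proof is correct, and its skeleton is the one the paper uses: represent $\{z:\ x\in L(z)\}$ and the set of recurrent points as explicit countable intersections of open sets, and obtain density from Theorem \ref{localtr}. The difference lies in how the density step is handled near the boundary of the ball. The paper first extends the hypotheses from $B(y,\varepsilon )$ to $\overline{B(y,\varepsilon )}$ (both ``$x\in J(w)$'' and the non-wandering property) by a diagonal procedure, citing \cite[Lemma 2.4]{cosma2}, precisely so that it can work inside the complete metric space $\overline{B(y,\varepsilon )}$ and invoke the Baire category theorem there. You avoid that extension altogether: since $B(y,\varepsilon )$ is open and dense in $\overline{B(y,\varepsilon )}$, every nonempty relatively open subset of the closure contains a small ball $B(w_0,\delta )\subset B(y,\varepsilon )$ to which the hypotheses restrict automatically, and Theorem \ref{localtr} applied to that ball supplies the required point. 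This buys you two small things: you never need the external diagonal lemma, and because you prove density of the full intersection directly, your closing appeal to Baire is in fact redundant --- a set that is simultaneously $G_{\delta}$ and dense in $\overline{B(y,\varepsilon )}$ is already a dense $G_{\delta}$ --- whereas the paper organizes its conclusion around Baire on the closed ball. The mathematical content is the same, and your version has the merit of making explicit the density verification that the paper leaves implicit.
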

\begin{proof}
By a diagonal procedure, see \cite[Lemma 2.4]{cosma2}, $x\in J(w)$ for every $w\in \overline{B(y,\varepsilon )}$ since $x\in J(w)$ for every $w\in B(y,\varepsilon )$.
Moreover, every point of the closure of $B(y,\varepsilon )$ is non-wandering. We do that in order to apply the Baire category theorem and Theorem \ref{localtr} for the
complete metric space $\overline{B(y,\varepsilon )}$. Now, in (i), let $D:=\{ z\in\overline{B(y,\varepsilon )}\,\,\mbox{such that}\,\,x\in L(z)\}$. Then $D$ can be
written as a countable intersection of open and dense subset of $\overline{B(y,\varepsilon )}$ in the following way:
\[
D=\bigcap_{s,n\in\mathbb{N}} \bigcup_{m>n}T^{-m}B(x,\frac{1}{s})\cap \overline{B(y,\varepsilon )}.
\]
Similarly, in (ii) let $D:=\{ z\in\overline{B(y,\varepsilon )}\,\,\mbox{such that}\,\,x\in L(z)\,\,\mbox{and}\,\, z\in L(z)\}$. Then
\[
D=\bigcap_{s,n\in\mathbb{N}} \left ( \bigcup_{m>n} T^{-m}B(x,\frac{1}{s}) \cap \bigcup_{l>n}\{ w\in X:\,  d(T^l w,w) < \frac{1}{s}\} \right ) \cap
\overline{B(y,\varepsilon)}.
\]
Hence, in both cases (i) and (ii), the set $D$ is a dense $G_{\delta}$ subset of $\overline{B(y,\varepsilon )}$.
\end{proof}

The following two results are immediate corollaries of Theorem \ref{localtr}. They can also be found in \cite[Satz 1.2.2]{Gro1} and in a more general form in
\cite[Theorem 26]{BENP}. We are grateful to K.-G. Grosse-Erdmann for pointing out this to us.

\begin{theorem}[A Birkhoff type transitivity theorem for non-separable completely metrizable spaces] \label{toptrans}
Let $X$ be a completely metrizable space which has no isolated points. Let $T:X\to X$ be a continuous map acting on $X$ and let $x\in X$. If $T$ is topologically
transitive there exists a dense $G_{\delta}$ subset $D$ of $X$ with the following properties:
\begin{enumerate}
\item[(i)] Every point $z\in D$ is recurrent.
\item[(ii)] The point $x$ belongs to the limit set $L(z)$ for every $z\in D$.
\end{enumerate}
\end{theorem}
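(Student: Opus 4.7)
The plan is to globalize Corollary \ref{localtr2}(ii). The key observation, already made in the Remark preceding Theorem \ref{localtr}, is that topological transitivity together with the absence of isolated points forces $J(w) = X$ for every $w \in X$. Indeed, topological transitivity gives $J(w) \cup O(w,T) = X$; since $J(w)$ is closed, $O(w,T)$ is at most countable, and $X$ (having no isolated points) is a Baire space with no countable open subset, we must have $J(w) = X$. In particular, $x \in J(w)$ and $w \in J(w)$ for every $w \in X$, so the hypotheses of Corollary \ref{localtr2}(ii) are satisfied on every open ball of $X$.

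Next I would mimic the construction from Corollary \ref{localtr2}(ii), but globally, defining directly on $X$
\[
D = \bigcap_{s,n \in \mathbb{N}} \left( \bigcup_{m > n} T^{-m} B\!\left(x, \tfrac{1}{s}\right) \,\cap\, \bigcup_{l > n} \left\{ w \in X : d(T^l w, w) < \tfrac{1}{s} \right\} \right).
\]
Continuity of the iterates $T^m$ and of the metric $d$ makes each of the constituent sets in the union open, so $D$ is $G_\delta$ in $X$. Any $z \in D$ satisfies, for each $s$, that there are arbitrarily large $m$ with $T^m z \in B(x,1/s)$ and arbitrarily large $l$ with $d(T^l z, z) < 1/s$; a diagonal extraction gives a strictly increasing $\{m_n\}$ with $T^{m_n} z \to x$ (so $x \in L(z)$) and a strictly increasing $\{l_n\}$ with $T^{l_n} z \to z$ (so $z$ is recurrent).

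It remains to prove $D$ is dense, and here I would invoke the Baire category theorem: it suffices to show each of the open sets $\bigcup_{m > n} T^{-m} B(x, 1/s)$ and $\bigcup_{l > n} \{w : d(T^l w, w) < 1/s\}$ is dense in $X$. For the first, take any non-empty open $U$ and any $n, s$; pick any $y \in U$, and since $x \in J(y)$ there exist $y_k \to y$ and positive integers $m_k \to \infty$ with $T^{m_k} y_k \to x$, so for $k$ large enough $y_k \in U$, $m_k > n$, and $T^{m_k} y_k \in B(x, 1/s)$. For the second, pick $y \in U$ and use $y \in J(y)$: there exist $y_k \to y$ and $l_k \to \infty$ with $T^{l_k} y_k \to y$, and for $k$ large $y_k \in U$, $l_k > n$, and $d(T^{l_k} y_k, y_k) \leq d(T^{l_k} y_k, y) + d(y, y_k) < 1/s$.

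I do not foresee a genuine obstacle: the substantive content is already packaged in Theorem \ref{localtr} (the nested-ball construction of a recurrent point hitting a target set), and the present statement is its Baire-category density reformulation. The only point that deserves care is the initial reduction from topological transitivity to $J(w) = X$ for \emph{every} $w$, which is precisely where the no-isolated-points assumption enters.
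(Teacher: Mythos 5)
Your proof is correct. The reduction from topological transitivity plus the absence of isolated points to $J(w)=X$ for every $w\in X$ is exactly the observation in the Remark at the start of \S2 (with the implicit, standard identification of topological transitivity with $J(w)\cup O(w,T)=X$, obtained by splitting into bounded and unbounded return times), and your set $D$ is precisely the $G_{\delta}$ formula appearing in the proof of Corollary \ref{localtr2}(ii), now written on all of $X$. Where you diverge from the paper is in how density is verified: the paper presents Theorem \ref{toptrans} as an immediate consequence of Theorem \ref{localtr} and Corollary \ref{localtr2}, so the density of the sets $\bigcup_{m>n}T^{-m}B(x,1/s)$ and $\bigcup_{l>n}\{w: d(T^lw,w)<1/s\}$ is ultimately certified by the nested-ball (Cantor intersection) construction of Theorem \ref{localtr} performed inside each ball, whereas you check density directly from the definition of the $J$-sets with a single approximation per open set, bypassing that construction entirely. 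For the global statement your shortcut is simpler and perfectly adequate, since under $J(w)=X$ everywhere the Baire category theorem delivers recurrence and $x\in L(z)$ simultaneously; the extra strength of Theorem \ref{localtr} is really needed when $x\in J(w)$ is only assumed for $w$ in a single ball (as exploited later in Proposition \ref{altra}), a situation your argument does not cover but the theorem at hand does not require.
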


\begin{corollary} \label{countablepoints}
Let $X$ be a completely metrizable space which has no isolated points. Let $T:X\to X$ be a continuous map acting on  $X$ and let $A$ be a countable subset of $X$. If $T$
is topologically transitive there exists a dense $G_{\delta}$ subset $D$ of $X$ with the following properties:
\begin{enumerate}
\item[(i)] Every point $z\in D$ is recurrent.
\item[(ii)] The set $\overline{A}$ is a subset of $L(z)$ for every $z\in D$.
\end{enumerate}
\end{corollary}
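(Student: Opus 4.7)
The plan is to derive this as an immediate consequence of Theorem \ref{toptrans} combined with the Baire category theorem. Since $A$ is countable, enumerate $A=\{a_n\}_{n\in\mathbb{N}}$ (if $A$ is finite, the same argument works with a finite intersection). For each $n\in\mathbb{N}$, Theorem \ref{toptrans} applied with $x:=a_n$ produces a dense $G_\delta$ subset $D_n$ of $X$ such that every $z\in D_n$ is recurrent and $a_n\in L(z)$.

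Set $D:=\bigcap_{n\in\mathbb{N}} D_n$. Because $X$ is completely metrizable, the Baire category theorem applies, so $D$ is again a dense $G_\delta$ subset of $X$. By construction, every $z\in D$ is recurrent, which gives (i). Moreover, for any fixed $z\in D$ we have $a_n\in L(z)$ for every $n$, so $A\subseteq L(z)$.

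To upgrade $A\subseteq L(z)$ to $\overline{A}\subseteq L(z)$, I would invoke the fact (already recalled in the paper, see e.g.\ \cite[Proposition 2.6]{cosma1}) that the limit set $L(z)$ is a closed subset of $X$. Taking closures in the inclusion $A\subseteq L(z)$ then yields $\overline{A}\subseteq L(z)$, which is (ii).

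I do not foresee any real obstacle: the only non-trivial ingredient is the closedness of $L(z)$, which the paper already uses, and the countability of $A$ is exactly what makes the Baire-type intersection argument work. The proof is essentially a one-line reduction to Theorem \ref{toptrans}.
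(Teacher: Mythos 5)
Your proof is correct and coincides with the argument the paper intends (it states the corollary as an immediate consequence of Theorem \ref{toptrans}): apply that theorem to each point of the countable set $A$, intersect the resulting dense $G_{\delta}$ sets via the Baire category theorem, and use the closedness of $L(z)$ to pass from $A\subseteq L(z)$ to $\overline{A}\subseteq L(z)$. No gaps to report.
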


Note that if $X$ is completely metrizable and separable then we can recover the original transitivity theorem of Birkhoff by applying the previous corollary.

\section{Applications to Linear Dynamics}
In this section we will use Theorem \ref{hypsub} and known results from the theory of hypercyclic operators to derive similar results for the non-separable case.

\medskip

We quote from \cite[Preface]{GEPe}, ``...Some of the deepest, most beautiful and most useful results from linear dynamics is Ansari's theorem on the powers of
hypercyclic operators and the Le\'{o}n-M\"{u}ller theorem on the hypercyclicity of unimodular multiples of hypercyclic operators...". More precisely, Ansari showed in
\cite{Ansa} that if $T$ is a hypercyclic operator then any positive power of $T$ is also a hypercyclic operator with the same set of hypercyclic vectors. And
Le\'{o}n-Saavedra and M\"{u}ller  showed in \cite{LeoMu} that if $T$ is a hypercyclic operator and $\lambda$ is a complex number of modulus $1$ then $\lambda\, T$ is
also a hypercyclic operator with the same hypercyclic vectors. The reason that we can obtain similar results in the more general setting of a topologically transitive
operator is because each vector in $X$ is contained in ``many"  closed invariant subspaces  such that the restriction of the operator to each one of them is hypercyclic:

\begin{theorem} \label{hypsub}
Let $T:X\to X$ be a topologically transitive operator acting on a completely metrizable vector space $X$ and let $y$ be a vector of $X$. There exists a dense
$G_{\delta}$ subset $D$ of $X$ such that for each $z\in D$ there exists a $T$-invariant (separable) closed subspace $Y_z$ of $X$ with $y,z\in Y_z$ such that the
restriction of $T$ to $Y_z$, $T:Y_z\to Y_z$,  is hypercyclic.
\end{theorem}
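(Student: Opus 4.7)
The plan is: for every $z\in X$, construct a separable closed $T$-invariant subspace $Y_z$ of $X$ containing $y$ and $z$ on which $T|_{Y_z}$ is topologically transitive; then Birkhoff's transitivity theorem, applied inside $Y_z$ (a separable completely metrizable space with no isolated points, being a non-trivial topological vector subspace of $X$), yields hypercyclicity of $T|_{Y_z}$. Because no restriction on $z$ is required, the set $D=X$ itself, trivially a dense $G_{\delta}$, fulfils the conclusion of the theorem.

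I would build $Y_z$ by an inductive closure-under-transitivity procedure. Fix $z\in X$ and set $Y^{(0)}:=\overline{\mathrm{span}}(O(y,T)\cup O(z,T))$, which is separable, closed, $T$-invariant, and contains $y$ and $z$. Given $Y^{(n)}$ with these properties, choose a countable dense subset $\{u_k^{(n)}\}_{k\ge 1}$ of $Y^{(n)}$; for each triple $(i,j,m)\in\mathbb{N}^3$, topological transitivity of $T$ on $X$ produces a vector $w^{(n)}_{i,j,m}\in X$ and a positive integer $N^{(n)}_{i,j,m}$ with
\[
d\bigl(w^{(n)}_{i,j,m},u_i^{(n)}\bigr)<\tfrac{1}{m},\qquad d\bigl(T^{N^{(n)}_{i,j,m}}w^{(n)}_{i,j,m},u_j^{(n)}\bigr)<\tfrac{1}{m}.
\]
Let $Y^{(n+1)}$ be the closed linear span of $Y^{(n)}$ together with the countable family $\{T^r w^{(n)}_{i,j,m}:i,j,m\in\mathbb{N},\,r\ge 0\}$; it is again separable, closed, and $T$-invariant. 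Put $Y_z:=\overline{\bigcup_{n\ge 0}Y^{(n)}}$, which is separable, closed, $T$-invariant, and contains $y$ and $z$.

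To verify topological transitivity of $T|_{Y_z}$, take non-empty open subsets $U',V'\subseteq Y_z$. Density of $\bigcup_n Y^{(n)}$ in $Y_z$ combined with the nesting $Y^{(n_1)}\subseteq Y^{(n_2)}$ for $n_1\le n_2$ yields an index $n$ with $U'\cap Y^{(n)}\ne\emptyset\ne V'\cap Y^{(n)}$; density of $\{u_k^{(n)}\}$ in $Y^{(n)}$ then produces indices $i,j$ with $u_i^{(n)}\in U'$ and $u_j^{(n)}\in V'$. For $m$ sufficiently large, the witness $w^{(n)}_{i,j,m}\in Y^{(n+1)}\subseteq Y_z$ lies in $U'$ and its iterate $T^{N^{(n)}_{i,j,m}}w^{(n)}_{i,j,m}$ lies in $V'$, so $T^N U'\cap V'\ne\emptyset$ with $N=N^{(n)}_{i,j,m}\ge 1$. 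Birkhoff's theorem applied inside $Y_z$ then provides a dense $G_{\delta}$ of hypercyclic vectors for $T|_{Y_z}$, completing the proof.

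The main technical obstacle, I expect, is the bookkeeping in the inductive step: one must verify that the countably many witnesses adjoined at stage $n$, anchored only to the countable dense subset of $Y^{(n)}$, are genuinely enough to certify transitivity in the limit $Y_z$ against arbitrary (potentially uncountable) open sets. The argument works because every open set in $Y_z$ meets some $Y^{(n)}$, and inside that $Y^{(n)}$ it meets the pre-selected countable dense family, which is exactly where the transitivity witnesses were anchored. One can alternatively first invoke Corollary \ref{countablepoints} with $A$ a countable dense subset of $\overline{\mathrm{span}}(O(y,T))$ to restrict attention to a proper dense $G_{\delta}$ set of recurrent $z$ with $\overline{\mathrm{span}}(O(y,T))\subseteq L(z)$, but the core inductive witness-adjoining step appears unavoidable.
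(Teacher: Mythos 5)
Your proposal is correct, but it proves the theorem by a genuinely different route than the paper. The paper first invokes its Corollary \ref{countablepoints} (i.e.\ the non-separable, ``localized'' Birkhoff machinery of Section 2) to produce the dense $G_{\delta}$ set $D$ of recurrent vectors whose orbit closures contain the closed linear span $W_0$ of $O(y,T)$; for $z\in D$ it then builds an increasing chain $W_0\subset\overline{O(x_1,T)}\subset W_1\subset\overline{O(x_2,T)}\subset\cdots$ of closed orbit-spans of recurrent vectors, sets $Y_z=\overline{\bigcup_n W_n}$, and checks transitivity of the restriction by finding a single $x_p$ whose orbit meets both open sets, using its recurrence to arrange the order of the two times. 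You instead run a closing-off (L\"owenheim--Skolem type) induction: at each stage you adjoin, for every pair of points of a countable dense subset of the current separable invariant subspace and every precision $1/m$, a transitivity witness from $X$ together with its orbit, so that the limit space is transitive essentially by construction; this uses nothing beyond the definition of transitivity and Baire (Birkhoff) inside the separable subspace, bypasses $J$-sets and recurrence entirely, and yields the formally stronger conclusion that every $z\in X$ admits such a $Y_z$, so $D=X$. What each approach buys: the paper's argument showcases and reuses its main new tool and gives additional dynamical information about $z$ (recurrence, and $W_0\subset\overline{O(z,T)}$), in line with Theorems \ref{toptrans} and \ref{ALM}; yours is more elementary and self-contained and strengthens the set $D$, at the cost of saying nothing dynamical about $z$ itself (which the statement and its application in Theorem \ref{ALM} do not require). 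One small point: the positive integer $N^{(n)}_{i,j,m}$ is not immediate from the paper's definition of transitivity, which allows $n=0$ (relevant when the two balls overlap); either justify it by noting that a nontrivial metrizable vector space has no isolated points, so one can pass to disjoint sub-balls and force $n\ge 1$, or observe that the case $N=0$ can only occur for $u_i=u_j$, where $U'\cap V'\neq\emptyset$ and transitivity of the restriction is trivial for that pair — either way the argument stands.
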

\begin{proof}
Let $W_0$ denote the closed linear span of the orbit $O(y,T)$, that is $W_0$ is the closure of the subspace $\{ P(T)y\,:\, $P$\,\,\mbox{polynomial}\}$. Since $T$ is
topologically transitive then, by Corollary \ref{countablepoints}, there exists a dense $G_{\delta}$ subset $D$ of $X$ which consists of recurrent vectors such that
$W_0\subset \overline{O(x,T)}$ for each $x\in D$. Now, choose a vector $z\in D$ and set $x_1:=z$. Thus, $W_0\subset \overline{O(x_1,T)}$ and $x_1\in L(x_1)$. Let $W_1$
be the closed linear span of the orbit $O(x_1,T)$. By the same corollary there exists a vector $x_2\in X$ such that $W_0\subset \overline{O(x_1,T)}\subset W_1\subset
\overline{O(x_2,T)}$ and $x_2\in L(x_2)$. Proceeding by induction, there is a sequence of vectors $\{ x_n\}_{n\in\mathbb{N}}$ in $X$ such that $W_{n-1}\subset
\overline{O(x_n,T)}\subset W_n$, where  $W_n$ denotes the closed linear span of the orbit $O(x_n,T)$, and $x_n\in L(x_n)$ for every $n\in\mathbb{N}$. Set
$Y_z:=\overline{\bigcup_{n=0}^{+\infty} W_n}$. Obviously, $Y_z$ is a $T$-invariant, closed and separable subspace of $X$. Now, let us show that the restriction of $T$ to
$Y_z$, $T:Y_z\to Y_z$, is hypercyclic. Let $U,V\subset Y_z$ be two non-empty relatively open subsets of $Y_z$. Since $Y_z:=\overline{\bigcup_{n=1}^{+\infty}
\overline{O(x_n,T)}}$ and $ \overline{O(x_n,T)}\subset  \overline{O(x_{n+1},T)}$ for every $n\in\mathbb{N}$, there exist a positive integer $p$ and some non-negative
integers $k$ and $m$ such that $T^kx_p\in U$ and $T^mx_p\in V$ (note that the vector $x_p$ is common for both $U$ and $V$). Since $x_p$ is a recurrent vector and $V$ is
open in $T_z$ we may assume that $m>k$. Now note that $T^{m-k}(T^kx_p)=T^mx_p\in V$. Thus, $T^{m-k}U\cap V\neq\emptyset$ and hence $T:Y_z\to Y_z$ is topologically
transitive. Therefore, by Birkhoff's transitivity theorem, $T:Y_z\to Y_z$ is hypercyclic and since $z$ was an arbitrary vector of $D$, which is a dense $G_{\delta}$
subset of $X$, the proof is completed.
\end{proof}

Now we are ready to show Ansari's and Le\'{o}n-M\"{u}ller theorems for topologically transitive operators:

\begin{theorem} \label{ALM}
Let $T:X\to X$ be a topologically transitive operator acting on a complex Fr\'{e}chet space $X$ and let $x\in X$. Then
\begin{enumerate}
\item[(i)] The operator $T^p:X\to X$ is topologically transitive for every positive integer $p$ and there exists a dense subset $D$ of $X$ with the
following properties:
\begin{enumerate}
\item[(a)] Every vector $z\in D$ is a recurrent vector for the operator  $T^p$.
\item[(b)] The vector $x$ belongs to the limit set $L_{T^p}(z)$ for every positive integer $p$ and for every $z\in D$
\end{enumerate}
\item[(ii)] If $\lambda$ is a complex number of modulus $1$ the operator $\lambda\, T:X\to X$ is topologically transitive and there exists a dense subset $D$ of $X$
with the following properties:
\begin{enumerate}
\item[(a)] Every vector $z\in D$ is a recurrent vector for the operator $\lambda\, T$.
\item[(b)] The vector $x$ belongs to the limit set $L_{\lambda\, T}(z)$ for every $|\lambda |=1$ and for every $z\in D$.
\end{enumerate}
\end{enumerate}
\end{theorem}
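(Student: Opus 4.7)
The plan is to leverage Theorem \ref{hypsub} to reduce both parts to the separable-case theorems of Ansari and León-Saavedra--Müller, which can be applied on each of the invariant subspaces that Theorem \ref{hypsub} produces. For part (i), apply Theorem \ref{hypsub} with $y := x$ to obtain a dense $G_\delta$ subset $D_0$ of $X$ and, for each $w \in D_0$, a $T$-invariant separable closed subspace $Y_w$ of $X$ with $x, w \in Y_w$ and $T|_{Y_w}$ hypercyclic. On the separable Fréchet space $Y_w$, Ansari's theorem yields that $T^p|_{Y_w}$ is hypercyclic for every positive integer $p$, with the same set $HC(T|_{Y_w})$ of hypercyclic vectors. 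Set
\[
D := \bigcup_{w \in D_0} HC(T|_{Y_w}).
\]
Since $HC(T|_{Y_w})$ is dense in $Y_w \ni w$, we have $w \in \overline{D}$ for every $w \in D_0$, so $D_0 \subseteq \overline{D}$ and $D$ is dense in $X$. For any $z \in D$, pick $w \in D_0$ with $z \in HC(T|_{Y_w})$: Ansari gives $z \in HC(T^p|_{Y_w})$ for every $p$, so the $T^p$-orbit of $z$ is dense in $Y_w$; since $x \in Y_w$ this gives $x \in L_{T^p}(z)$, and a standard Baire-type argument shows that any hypercyclic vector in a space without isolated points is recurrent, covering property (a).

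For the topological transitivity of $T^p$ on all of $X$, the construction above is not enough, since orbits of elements of $D$ lie inside a single $Y_w$ that may miss an arbitrary open set. Instead, given any non-empty open $U, V \subseteq X$, fix $v \in V$ and apply Theorem \ref{hypsub} afresh with $y := v$; this yields a dense $G_\delta$ set $D'$ and, for any $w \in U \cap D'$, a separable invariant subspace $Y'_w$ containing both $v$ and $w$ with $T|_{Y'_w}$ hypercyclic. Ansari gives that $T^p|_{Y'_w}$ is topologically transitive on $Y'_w$; the relatively open sets $U \cap Y'_w \ni w$ and $V \cap Y'_w \ni v$ are non-empty, so some iterate $T^{pn}(U \cap Y'_w)$ meets $V \cap Y'_w$, and hence $T^{pn}U \cap V \neq \emptyset$.

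Part (ii) is entirely parallel: every $Y_w$ is $\lambda T$-invariant (being a $T$-invariant linear subspace), and on the separable complex Fréchet space $Y_w$, the León-Saavedra--Müller theorem gives that $\lambda T|_{Y_w}$ is hypercyclic with the same hypercyclic vectors as $T|_{Y_w}$; everything else goes through verbatim. The main technical point to watch --- and the reason a \emph{single} set $D$ works simultaneously for every positive integer $p$ in (i) (resp. every unimodular $\lambda$ in (ii)) --- is the preservation of the set of hypercyclic vectors by Ansari (resp. León-Saavedra--Müller). Note also that $D$ is only claimed dense, not $G_\delta$; the union-over-$D_0$ construction naturally delivers exactly this.
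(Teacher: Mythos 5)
Your proposal is correct and takes essentially the same route as the paper: reduce to the separable $T$-invariant subspaces furnished by Theorem \ref{hypsub}, apply the Ansari and Le\'{o}n-Saavedra--M\"{u}ller theorems to the hypercyclic restrictions there, and take $D$ to be the union of the sets of hypercyclic vectors of these restrictions. Your separate, fresh application of Theorem \ref{hypsub} with $y$ a point of one of the two open sets to get transitivity of $T^p$ and $\lambda T$ on all of $X$ is exactly the paper's argument (the paper picks the point in $U$ and finds $z\in V$ in the dense $G_{\delta}$ set), just articulated more carefully.
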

\begin{proof}
Let $U,V$ be two non-empty open subsets of $X$ and let $x\in U$. By Theorem \ref{hypsub} there is a vector $z\in V$ and a $T$-invariant separable closed subspace $Y_z$
of $X$ with $x,z\in Y_z$ such that the restriction of $T$ to $Y_z$, $T:Y_z\to Y_z$, is hypercyclic. Hence, we can apply the original theorems of Ansari and
Le\'{o}n-M\"{u}ller for the restriction $T:Y_z\to Y_z$. Thus, the operators $T^p:Y_z\to Y_z$ and $\lambda\, T:Y_z\to Y_z$ are hypercyclic and share the same set of
hypercyclic vectors with $T:Y_z\to Y_z$. So we can find a (common) hypercyclic vector $w\in Y_z$ for the operators $T^p:Y_z\to Y_z$ and $\lambda\, T:Y_z\to Y_z$ such
that the vectors $x,z$ belong to the limit sets $L_{T^p}(w)$ and $L_{\lambda\, T}(w)$ respectively. Therefore, like in the proof of Theorem \ref{hypsub}, the operators
$T^p:X\to X$ and $\lambda\, T:X\to X$ are topologically transitive. In both cases (i) and (ii), let $D$ be the union of the sets of hypercyclic vectors of the family of
operators $T:Y_z\to Y_z$ for $z$ in a dense $G_{\delta}$ subset of $X$. Thus, $D$ is dense in $X$.
\end{proof}

\begin{remark}
Linearity of the map under consideration is essential in the proof of Theorem \ref{ALM}. It is evident, that even for real piecewise linear maps, a power of a
topologically transitive map may not be topologically transitive; see e.g. \cite{Banks}.
\end{remark}

We finish this section with a characterization, similar to that of H.N. Salas in \cite{Salas}, of topological transitivity of a backward unilateral weighted shift on
$l_2(H)$, where $H$ is a (not necessarily separable) Hilbert space, in terms of its weight sequence. The motivation of this problem comes from a work of T. Berm\'{u}dez
and N.J. Kalton \cite{BeKa}. In this paper they showed that spaces like $l^{\infty}(\mathbb{N})$ and $l^{\infty}(\mathbb{Z})$ do not support topologically transitive
operators. On the other hand, following an example suggested by J.H. Shapiro, they showed that every non-separable Hilbert space supports a topologically transitive
operator. To do that they wrote $H$ as $l_2(X)$ for some  Hilbert space $X$ of the same density character with $H$, i.e. a space with the same least cardinality of a
dense subset. Then they showed that the operator $2B$, where $B$ is the backward unilateral shift on $l_2(X)$, is topologically transitive. Hence, a problem that
naturally arises is to give a characterization of topological transitivity of a backward unilateral weighted shift on $l_2(H)$, where $H$ is a (not necessarily
separable) Hilbert space, in terms of its weight sequence. This characterization is similar to that of H.N. Salas in \cite{Salas}:

\begin{proposition} \label{Salas}
Let $H$ be Hilbert space and let $T:l_2(H)\rightarrow l_2(H)$ be a unilateral backward weighted shift with positive weight sequence $(w_n)$. The following are
equivalent:
\begin{enumerate}
\item[(i)] $T$ is topologically transitive.

\item[(ii)] There exists a non-trivial $T$-invariant (separable) closed subspace $Y\subset l_2(H)$ on which the restriction of $T$ to $Y$, $T:Y\to Y$, is hypercyclic.

\item[(iii)] The restriction $T:Y\to Y$ to any $T$-invariant (separable) closed subspace $Y\subset l_2(H)$ is hypercyclic.

\item[(iv)] $\limsup_{n\to\infty} (w_1\cdots w_n)\to +\infty$.
\end{enumerate}
\end{proposition}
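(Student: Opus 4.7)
The plan is to close the loop (i) $\Rightarrow$ (ii) $\Rightarrow$ (iv) $\Rightarrow$ (i), slotting (iii) in via Theorem \ref{hypsub} and the classical theorem of Salas \cite{Salas} for separable Hilbert spaces. The implication (i) $\Rightarrow$ (ii) is immediate from Theorem \ref{hypsub}: applied to any non-zero $y\in l_2(H)$ it produces a $T$-invariant separable closed subspace containing $y$ on which $T$ is hypercyclic, and this subspace is non-trivial because it contains $y$. The implication (iii) $\Rightarrow$ (ii) is trivial, since the closed linear span of the orbit of any non-zero vector of $l_2(H)$ is a non-trivial $T$-invariant separable closed subspace.

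The central analytic step is (ii) $\Rightarrow$ (iv). Let $Y$ be a non-trivial $T$-invariant separable closed subspace and let $v=(v_1,v_2,\ldots)\in Y$ be a hypercyclic vector for $T|_Y$. Writing $a_n:=w_1 w_2\cdots w_n$, the first coordinate of $T^n v$ is $a_n v_{n+1}$. The first-coordinate projection $P_1(Y)\subset H$ cannot reduce to $\{0\}$: if it did, then the identities $(T^k y)_1 = a_k y_{k+1}$ for $y\in Y$ together with $T$-invariance and positivity of the weights would force every coordinate of every element of $Y$ to vanish, contradicting $Y\neq\{0\}$. Choose $h\in P_1(Y)$ with $\|h\|=1$; density of $\{T^n v\}$ in $Y$ yields a subsequence $(n_k)$ with $a_{n_k}v_{n_k+1}\to h$, so $a_{n_k}\|v_{n_k+1}\|\to 1$. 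Since $v\in l_2(H)$ forces $\|v_{n+1}\|\to 0$, we conclude $a_{n_k}\to+\infty$, which is (iv). For (iv) $\Rightarrow$ (i) I would reduce to the separable case: given non-empty open $U,V\subset l_2(H)$, pick finitely supported $u\in U$ and $v\in V$ and let $H_0\subset H$ be the finite-dimensional closed linear span of the finitely many coefficient vectors of $u$ and $v$. Then $l_2(H_0)$ is a closed $T$-invariant subspace of $l_2(H)$ and $T|_{l_2(H_0)}$ is a backward weighted shift on a separable Hilbert space with weight sequence $(w_n)$; by Salas's theorem \cite{Salas} and hypothesis (iv) it is hypercyclic, hence topologically transitive, producing an $n$ with $T^n U\cap V\neq\emptyset$.

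The remaining implication (iv) $\Rightarrow$ (iii) is the most delicate and where I expect the main obstacle to lie. The natural approach is to verify the hypercyclicity criterion for $T|_Y$ on a given non-trivial $T$-invariant separable closed subspace $Y$: on the finitely supported vectors $T^n$ eventually vanishes, while the right inverse $S$ of $T$ defined by $S e_n:=e_{n+1}/w_{n+1}$ (tensored with the identity on $H$) contracts by a factor $1/a_n$, which by (iv) goes to zero along a subsequence. The obstacle is that $S$ need not preserve $Y$, so one cannot directly apply the criterion to $T|_Y$. My plan would be to embed $Y\subset l_2(H_Y)$, with $H_Y$ the closed linear span of the coordinate values of a countable dense subset of $Y$, use Salas's theorem inside the separable space $l_2(H_Y)$, and then combine this with orbit-closure constructions in the spirit of the proof of Theorem \ref{hypsub} to transfer hypercyclicity down to the (possibly proper) invariant subspace $Y$ itself. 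Handling this transfer while keeping the approximating vectors in $Y$ is the step I would need to think about most carefully.
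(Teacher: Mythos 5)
Your cycle (i) $\Rightarrow$ (ii) $\Rightarrow$ (iv) $\Rightarrow$ (i), together with the trivial (iii) $\Rightarrow$ (ii), is sound and in places more explicit than the paper's own proof, which simply invokes Theorem \ref{hypsub} for (i) $\Rightarrow$ (ii), cites Salas for ``(ii) implies (iii)'' and ``(iv) $\Leftrightarrow$ (iii)'', and calls (iii) $\Rightarrow$ (i) easy. Your first-coordinate argument for (ii) $\Rightarrow$ (iv) (using $(T^n v)_1=w_1\cdots w_n\, v_{n+1}$ and $\|v_{n+1}\|\to 0$) is a genuine alternative to quoting Salas at that point -- indeed $T|_Y$ on an arbitrary invariant $Y$ is not itself a weighted shift, so Salas cannot be applied verbatim there -- and your reduction of (iv) $\Rightarrow$ (i) to finitely supported vectors inside $l_2(H_0)$ is essentially the argument behind the paper's ``easy'' step (with the minor caveat that Salas's theorem is stated for scalar $l_2$; for $l_2(H_0)\cong l_2\otimes H_0$ one should either note that his proof carries over, or use that the shift satisfies the Hypercyclicity Criterion along the subsequence where $w_1\cdots w_n\to\infty$, so that finite direct sums remain hypercyclic).

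The genuine gap is that nothing in your proposal proves an implication \emph{into} (iii): you only sketch (iv) $\Rightarrow$ (iii) and acknowledge you cannot complete the transfer. This is not a repairable technicality under the literal reading of (iii). For an arbitrary $T$-invariant separable closed subspace the implication is simply false: for any nonzero $h\in H$ the one-dimensional subspace $Y=\{(ch,0,0,\ldots):\ c\in\mathbb{C}\}$ is closed and $T$-invariant (it is mapped to $0$), and $T|_Y=0$ is not hypercyclic, whatever the weights are; even if one insists on $T(Y)=Y$, the span of an eigenvector (e.g.\ the eigenvalue $1$ eigenvector of $2B$, where all $w_n=2$) gives a counterexample. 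So verifying the Hypercyclicity Criterion for $T|_Y$, or transferring hypercyclicity from $l_2(H_Y)$ down to $Y$, cannot succeed for arbitrary $Y$. The reading under which the paper's citation of Salas makes sense is that (iii) concerns invariant subspaces on which the restriction is again a unilateral backward weighted shift with weight sequence $(w_n)$ -- e.g.\ $Y=l_2(H_0)$ for a closed separable $H_0\subset H$ -- and for those (iv) $\Rightarrow$ (iii) is immediate from Salas, exactly as in your own (iv) $\Rightarrow$ (i) step. With that restricted reading your remaining implications close the equivalence; without it, condition (iii) as printed must be reformulated rather than proved.
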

\begin{proof}
Property (i) implies (ii) by Theorem \ref{hypsub}. From Salas characterization of hypercyclic unilateral backward shifts in \cite{Salas}, claim (ii) implies (iii). From
the same theorem property (iv) implies (iii) and vice versa. Finally, it is very easy to see that claim (iii) implies (i).
\end{proof}

\section{Almost topologically transitive operators are topologically transitive}
Among the concepts that generalize, or are closely related to, topological transitivity is the concept of a topological semi-transitive operator; see e.g. \cite{RoTro}
for the case of operator algebras. Recall that an operator $T:X\to X$ acting on a topological vector space $X$ is called \textit{topologically semi-transitive} if for
every two non-zero vectors $x,y\in X$ we have that $x\in \overline{O(y,T)}$ or $y\in \overline{O(x,T)}$. There are some disadvantages in this concept for the case of the
action of a semigroup of linear operators. One disadvantage is that  topological semi-transitivity is not a topological concept. Another disadvantage is that topological
semi-transitivity is very restrictive, since even a chaotic operator is not topologically semi-transitive (to see that just consider two non-zero periodic vectors). A
concept that looks more attractive to generalize topological transitivity is the concept of almost topological transitivity:

\begin{definition} \label{defalmost}
Let $X$ be a Hausdorff topological space and let $T:X\to X$ be a continuous map acting on $X$. We call $T$ \textit{almost topologically transitive} if for every pair of
non-empty open sets $U,V\subset X$ there exists a non-negative integer $n$ such that $T^nU\cap V\neq \emptyset$ or $T^nV\cap U\neq \emptyset$. That is $x\in
D(y):=O(y,T)\cup J(y)$ or $y\in D(x):=O(x,T)\cup J(x)$ for every $x,y\in X$.
\end{definition}

Note that if $X$ is a completely metrizable space which has no isolated points the orbit $O(x,T)$ has empty interior for every $x\in X$. In this case the following
proposition shows that a continuous map $T:X\to X$ is almost topologically transitive if $x\in J(y)$ or $y\in J(x)$ for every $x,y\in X$.

\begin{proposition} \label{altraJ}
Let $X$ be a completely metrizable space which has no isolated points. A continuous map $T:X\to X$ is almost topologically transitive if $x\in J(y)$ or $y\in J(x)$ for
every $x,y\in X$.
\end{proposition}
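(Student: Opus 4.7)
The plan is to unpack both the conclusion and the hypothesis down to the level of points and sequences, at which point the two conditions become direct translations of each other. I expect the argument to be essentially one step from the definitions.

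Concretely, given an arbitrary pair of non-empty open sets $U,V\subset X$, I would pick any $x\in U$ and $y\in V$ and apply the hypothesis to this particular pair: either $x\in J(y)$ or $y\in J(x)$. By the symmetry of the conclusion (which allows one to interchange the roles of $U$ and $V$), it suffices to treat the case $y\in J(x)$. Unfolding the definition of $J(x)$ yields a sequence $x_n\to x$ and a strictly increasing sequence of positive integers $\{k_n\}$ with $T^{k_n}x_n\to y$. Since $U$ and $V$ are open, for $n$ sufficiently large we have $x_n\in U$ and $T^{k_n}x_n\in V$, so $T^{k_n}U\cap V\neq\emptyset$. This verifies the open-set condition of Definition \ref{defalmost} with the non-negative integer $k_n$, whence $T$ is almost topologically transitive.

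I do not foresee any real obstacle: the implication is genuinely a one-step translation through the definition of the generalized limit set, and the strict monotonicity of $\{k_n\}$ is more than enough to produce the required non-negative witness. Neither complete metrizability nor the absence of isolated points is actually consumed by this direction of the argument. Their role is instead interpretive, as made explicit in the paragraph preceding the proposition: in that setting each orbit $O(x,T)$ is countable and has empty interior, so the $O(y,T)$ clause in the expression $D(y):=O(y,T)\cup J(y)$ from Definition \ref{defalmost} is inessential at the level of open-set testing. This is exactly what legitimises replacing $D$-sets by the smaller $J$-sets in the sufficient condition, turning the proposition into a genuine strengthening of what a purely formal point-wise rewriting of Definition \ref{defalmost} would provide.
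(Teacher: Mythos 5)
Your proposal proves only the trivial implication: if $x\in J(y)$ or $y\in J(x)$ for all pairs, then $T$ is almost topologically transitive. This is immediate from $J(y)\subseteq D(y)=O(y,T)\cup J(y)$ and adds essentially nothing to the ``that is'' reformulation already built into Definition \ref{defalmost}; as you yourself note, it uses neither complete metrizability nor the absence of isolated points. That observation should have been the warning sign. The actual content of Proposition \ref{altraJ} --- as shown both by the paper's own proof (which begins ``Assume that there exists a pair $x,y$ with $x\notin J(y)$ and $y\notin J(x)$\dots since $T$ is almost topologically transitive'') and by the way it is invoked in Proposition \ref{altra} (``$x\in J(x)$ for every $x$ since $T$ is almost topologically transitive'', and ``$x\in J(y)$ for every $y$ in the complement of $J(x)$'') --- is the converse implication: almost topological transitivity, which a priori only yields $x\in O(y,T)\cup J(y)$ or $y\in O(x,T)\cup J(x)$, can be upgraded to the pure $J$-set condition. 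The word ``if'' in the statement is best read as part of an equivalence whose nontrivial half is the one the paper proves; your proof supplies only the vacuous half and so cannot support the later application.

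The converse is exactly where the hypotheses on $X$ are consumed, and none of its ingredients appear in your proposal. The paper argues by contradiction: if $x\notin J(y)$ and $y\notin J(x)$, almost transitivity forces (say) $x\in O(y,T)$; since orbits are countable and $X$ is a Baire space without isolated points, $O(y,T)$ has empty interior, so one can pick $x_n\to x$ with $x_n\notin O(y,T)$; applying almost transitivity to each pair $(x_n,y)$, and using that $J$-sets are closed together with the diagonal procedure of \cite[Lemma 2.4]{cosma2}, one either gets $x\in J(y)$ directly, or else $y\in O(x,T)$, in which case $O(x,T)=O(y,T)$ is a periodic orbit and again $x\in J(y)$; the degenerate case $x=y$ needs a separate (similar) treatment. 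Without an argument of this kind, the substantive half of the proposition remains unproved, and the deduction of Proposition \ref{altra} from it collapses.
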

\begin{proof}
We proceed by contradiction. Assume that there exists a pair of (not necessarily distinct) vectors $x,y\in X$ such that $x\notin J(y)$ and $y\notin J(x)$. Hence, $x\in
O(y,T)$ or $y\in O(x,T)$ since $T$ is almost topologically transitive. Without loss of generality we may assume that $x\in O(y,T)$. Since the orbit $O(y,T)$ has empty
interior there is a sequence $\{x_n\}_{n\in\mathbb{N}}$ in $X$ such that $x_n\to x$ and $x_n\notin O(y,T)$ for every $n\in\mathbb{N}$. Thus, the following hold: (a)
$x_n\in J(y)$ for infinitely many $n\in\mathbb{N}$, so $x\in J(y)$ since $J(y)$ is a closed subset of $X$ which is a contradiction or (b) $y\in D(x_n):=O(x_n,T)\cup
J(x_n)$ eventually. In that case, using a diagonal procedure similar to the one in \cite[Lemma 2.4]{cosma2}, it follows that $y\in D(x):=O(x,T)\cup J(x)$. This implies
that $y\in O(x,T)$ since $y\notin J(x)$. But also $x\in O(y,T)$, hence $O(x,T)=O(y,T)$. So, if $x,y$ are distinct vectors then $O(x,T)=O(y,T)$ is a periodic orbit, thus
$x\in J(y)$ (and $y\in J(x)$) which is a contradiction. If $x=y$, and since $x_n\notin O(y,T)=O(x,T)$, we may assume that $x_n\neq x$ for every $n\in\mathbb{N}$. Hence,
as we showed above for the case of two distinct points, $x_n\in J(x)$ for infinitely many $n\in\mathbb{N}$ or $x\in J(x_n)$ eventually. In both cases, $x\in J(x)=J(y)$,
by \cite[Lemma 2.4]{cosma2}, which is again a contradiction.
\end{proof}

The above proposition and Corollary \ref{localtr2} imply the following:

\begin{proposition} \label{altra}
Let $X$ be a completely metrizable space which has no isolated points and let $T:X\to X$ be an almost topologically transitive selfmap of $X$. Then $T$ is topologically
transitive.
\end{proposition}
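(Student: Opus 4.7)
The plan is to show $J(x)=X$ for every $x\in X$, which by the Remark in Section~2 is equivalent to topological transitivity in a completely metrizable space with no isolated points. So fix $x\in X$ and an arbitrary non-empty open set $V\subset X$; the goal is to produce a point of $V$ lying in $J(x)$. Proposition~\ref{altraJ} supplies the dichotomy ``$y\in J(x)$ or $x\in J(y)$'' for every $y\in V$. If some $y\in V$ already lies in $J(x)$ we are done; otherwise $x\in J(w)$ for every $w\in V$, and in particular $x\in J(w)$ for every $w\in B(y_0,\varepsilon)$, where $B(y_0,\varepsilon)$ is any open ball contained in $V$.

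For this unfavorable case I would then invoke Corollary~\ref{localtr2}(ii). Its extra hypothesis that every point of $B(y_0,\varepsilon)$ be non-wandering is free of charge, since specializing $y=x$ in Proposition~\ref{altraJ} already forces $w\in J(w)$ for every $w\in X$. The corollary then furnishes a dense $G_{\delta}$ subset $D$ of $\overline{B(y_0,\varepsilon)}$ such that for each $z\in D$ both $x\in L(z)$ and $z\in L(z)$ hold; pick any $z\in D\cap B(y_0,\varepsilon)$, which is nonempty by density.

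The final step is to promote ``$x\in L(z)$'' to ``$z\in J(x)$'' by exploiting recurrence of $z$. Choose sequences $k_n\nearrow\infty$ with $T^{k_n}z\to x$ and $m_n\nearrow\infty$ with $T^{m_n}z\to z$, and set $a_n:=T^{k_n}z$, so that $a_n\to x$. Thinning to indices $j(n)$ with $m_{j(n)}>k_n+n$ and $d(T^{m_{j(n)}}z,z)<1/n$, and defining $l_n:=m_{j(n)}-k_n$, one obtains positive integers $l_n\to\infty$ (strictly increasing after a further extraction) satisfying $T^{l_n}a_n=T^{m_{j(n)}}z\to z$. Together with $a_n\to x$ this witnesses $z\in J(x)$, and since $z\in V$ we conclude $V\cap J(x)\neq\emptyset$.

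I expect the main obstacle to be exactly this last step. Proposition~\ref{altraJ} delivers only the asymmetric dichotomy $x\in J(y)$ or $y\in J(x)$, so in the unfavorable case Corollary~\ref{localtr2}(i) by itself would yield only iterates flowing from $V$ towards $x$, which is the ``wrong'' direction for transitivity. The symmetry needed to reverse this is supplied by recurrence of $z$, and recurrence is available precisely because almost topological transitivity forces every point to be non-wandering, thereby unlocking the stronger clause~(ii) of Corollary~\ref{localtr2}.
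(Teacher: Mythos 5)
Your argument is correct and essentially reproduces the paper's proof: both use the dichotomy of Proposition \ref{altraJ} (yielding $x\in J(w)$ on a ball plus non-wandering of every point), then Corollary \ref{localtr2}(ii) to get a recurrent $z$ with $x\in L(z)$, and finally the same index-shifting trick $T^{m_n-k_n}(T^{k_n}z)=T^{m_n}z$ with $m_n-k_n\to\infty$ to conclude $z\in J(x)$. The only difference is cosmetic: you argue directly that $J(x)$ meets every non-empty open set (hence, being closed, equals $X$), while the paper assumes $J(x)\neq X$ and derives a contradiction inside the complement.
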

\begin{proof}
Note that $x\in J(x)$ for every $x\in X$ since $T$ is almost topologically transitive. Now, let us show that $J(x)=X$ for every $x\in X$. We argue by contradiction.
Assume that $J(x)\neq X$ for some $x\in X$. Hence, $x\in J(y)$ for every $y$ is in the complement of $J(x)$. Therefore, by Corollary \ref{localtr2}, there exist a point
$y\notin J(x)$ and two strictly increasing sequences of positive integers $\{ k_n\}$ and $\{ m_n\}$ such that $T^{k_n}y\to x$ and $T^{m_n}y\to y$. We may assume that
$m_n-k_n\to +\infty$. Thus, $T^{m_n-k_n}(T^{k_n}y)=T^{m_n}y\to x$ and $T^{k_n}y\to x$, hence $y\in J(x)$ which is a contradiction.
\end{proof}

\noindent \textbf{Acknowledgements.} We would like to thank Fr\'{e}d\'{e}ric Bayart, George Costakis, Karl-Goswin Grosse-Erdmann and Alfredo Peris for their very useful
comments.

\end{document}